\documentclass[reqno, 11pt]{amsart}

\makeatletter
\let\origsection=\section \def\section{\@ifstar{\origsection*}{\mysection}} 
\def\mysection{\@startsection{section}{1}\z@{.7\linespacing\@plus\linespacing}{.5\linespacing}{\normalfont\scshape\centering\S}}
\makeatother    

\linespread{1.3}
\usepackage{geometry}
\geometry{left=27.5mm,right=27.5mm, top=40mm, bottom=35mm, marginparwidth=35mm}
\numberwithin{equation}{section}
\numberwithin{figure}{section}

\usepackage{xcolor}
\usepackage{hyperref}
\hypersetup{
    unicode,
    colorlinks,
    linkcolor={red!60!black},
    citecolor={green!60!black},
    urlcolor={blue!60!black}
}

\usepackage{amsfonts,amsthm,amssymb,amsmath}
\usepackage[T1]{fontenc}
\usepackage{microtype}
\usepackage{graphicx}
\usepackage{tikz}
\usepackage{tkz-berge}
\usepackage[usenames,dvipsnames]{pstricks}
\usepackage{epsfig}
\usepackage{verbatim}
\usepackage{scalerel}

\usepackage{enumerate,enumitem}
\usepackage{thmtools, thm-restate}
\usetikzlibrary{decorations.pathmorphing, arrows, calc, matrix, arrows.meta}

\usepackage{mathtools}
\usepackage{subcaption}

\newtheorem{theorem}{Theorem}
\numberwithin{theorem}{section}
\newtheorem{lemma}[theorem]{Lemma}  
\newtheorem{cor}[theorem]{Corollary}

\theoremstyle{definition}

\newtheorem{prob}{Problem}

\newcommand{\script}{\mathcal}

\newcommand{\Set}[1]{{\left\lbrace {#1} \right\rbrace}}

\newcommand{\cardinality}[1]{{\left\lvert {#1} \right\rvert}}

\def\set#1:#2{\Set{{#1} \colon {#2}}}
\def\downcl#1{\lceil{#1}\rceil}
\def\upcl#1{\lfloor{#1}\rfloor}

\newcommand{\N}{\mathbb{N}}
\newcommand{\sub}{\subseteq}

\renewcommand{\triangleleft}{\vartriangleleft}
\renewcommand{\leq}{\leqslant}
\renewcommand{\geq}{\geqslant}
\renewcommand{\preceq}{\preccurlyeq}
\renewcommand{\rho}{\varrho}
\renewcommand{\subset}{\subseteq}

\newcommand{\nottriangleleft}{\not\kern-1pt\mathrel{\triangleleft}}

\DeclareMathOperator{\cf}{cf}

\begin{document}
\title{A new obstruction for normal spanning trees}

\author{Max Pitz}
\address{Hamburg University, Department of Mathematics, Bundesstra\ss e 55 (Geomatikum), 20146 Hamburg, Germany}
\email{max.pitz@uni-hamburg.de}

\keywords{normal spanning trees, minor, colouring number, stationary sets}

\subjclass[2010]{05C83, 05C05, 05C63}  

\begin{abstract}
In a paper from 2001 (Journal of the LMS), Diestel and Leader offered a proof that a connected graph has a normal spanning tree if and only if it does not contain a minor from two specific forbidden classes of graphs, all of cardinality $\aleph_1$.

Unfortunately, their proof contains a gap, and their result is incorrect. In this paper, we construct a third type of obstruction: an $\aleph_1$-sized graph without a normal spanning tree that contains neither of the two types described by Diestel and Leader as a minor. Further, we show that any list of forbidden minors characterising the graphs with normal spanning trees must contain graphs of arbitrarily large cardinality.
\end{abstract}

\maketitle

\section{Introduction}

A rooted spanning tree $T$ of a graph $G$ is called \emph{normal} if the ends of any edge of $G$ are comparable in the natural tree order of $T$. Intuitively, the edges of $G$ run `parallel' to branches of $T$, but never `across'. All countable connected graphs have normal spanning trees, but uncountable graphs might not, as demonstrated by complete graphs on uncountably many vertices.

Halin observed in \cite{halin2000miscellaneous} 
that the property of having a normal spanning tree is minor-closed, i.e.\ preserved under taking connected minors. Recall that a graph $H$ is a \emph{minor} of another graph $G$, written $H \preceq G$, if to every vertex $x \in H$ we can assign a (possibly infinite) connected set $V_x \subset V(G)$, called the \emph{branch set} of $x$, so that these sets $V_x$ are disjoint for different $x$ and $G$ contains a $V_x-V_y$ edge whenever $xy$ is an edge of $H$. 

In \cite[Problem~7.3]{halin2000miscellaneous} Halin asked for a forbidden minor characterisation for the property of having a normal spanning tree. In the universe of finite graphs, the famous Seymour-Robertson Theorem asserts that any minor-closed property of finite graphs can be characterised by \emph{finitely} many forbidden minors, see e.g.\ \cite[\S12.7]{Bible}. 
Whilst for infinite graphs, we generally need an infinite list of forbidden minors, Diestel and Leader \cite{DiestelLeaderNST} published a proof claiming that for the property of having a normal spanning tree, the forbidden minors come in two structural types: 

First, the class of $(\aleph_0,\aleph_1)$\emph{-graphs}, bipartite graphs $(A,B)$ such that $\cardinality{A}=\aleph_0$, $\cardinality{B}=\aleph_1$, and every vertex in $B$ has infinite degree. Structural results on this graph class can be found in \cite{BowlerGeschkePitzNST}.
And second, the class of \emph{Aronszajn-tree graphs}, graphs whose vertex set is an order theoretic Aronszajn tree $\script{T}$ (an order tree $(\script{T},\leq)$ of size $\aleph_1$ in which all levels and branches are countable) such that the down-neighbourhood of any node $t \in \script{T}$ is cofinal below~$t$.

However, there is a gap in Diestel and Leader's proof, and it turns out that their list of forbidden minors is incomplete:
 In Section~\ref{sec_counterexamples}, we exhibit a third obstruction for normal spanning trees -- a graph without normal spanning tree containing neither an $(\aleph_0,\aleph_1)$-graph nor an Aronszajn-tree graph as a minor. More significantly, we will see in Section~\ref{sec_counterexamples2} why any list of forbidden minors that works just under the usual axioms of set theory ZFC must contain graphs of arbitrary large cardinality. In between, in Section~\ref{sec_discussion}, we discuss how these new obstructions occur naturally when trying to build a normal spanning tree.

Where does this leave us? Fortunately, all is not lost. Indeed, this new third obstruction only demonstrates that the 3-way interaction between normal spanning trees, graphs on order trees and the colouring number of infinite graphs is deeper and more intriguing than initially thought. Recall that a graph $G$ has \emph{countable colouring number} if there is a well-order $\leq^*$ on $V(G)$ such that every vertex of $G$ has only finitely many neighbours preceding it in $\leq^*$. 
Every graph with a normal spanning tree, and hence every minor of it, has countable colouring number, as witnessed by well-ordering the graph level by level.  

The most important consequence of Diestel and Leader's proposed forbidden minor characterisation was that it would have implied Halin's conjecture \cite[Conjecture~7.6]{halin2000miscellaneous}, that a connected graph has a normal spanning tree if and only if every minor of it has countable colouring number.
In a paper in preparation \cite{pitz2020d}, I will give a direct proof of Halin's conjecture. From this, a revised forbidden minor characterisation of graphs with normal spanning trees can be deduced.

\section{Preliminaries}

We follow the notation in \cite{Bible}. Given a subgraph $H \subseteq G$, write $N(H)$ for the set of vertices in $G -H$ with a neighbour in $H$. 

\subsection{Normal spanning trees}

If $T$ is a (graph-theoretic) tree with root~$r$, we write $x \le y$
for vertices $x,y\in T$ if $x$ lies on the unique $r$--$y$ path in~$T$.
A rooted spanning tree $T \subset G$ is \emph{normal} if the ends of any edge of $G$ are comparable in this tree order on $T$.

A set of vertices $U \subset V(G)$ is \emph{dispersed} (in G) if every ray in $G$ can be separated from $U$ by a finite set of vertices. 
The following theorem  of Jung from  \cite{jung1969wurzelbaume}, from which we will use the implication  $(1) \Rightarrow (2)$ further below, characterises graphs with normal spanning trees, see also \cite{pitz2020} for a short proof.

\begin{theorem}[Jung]
\label{thm_Jung} The  following are equivalent  for a connected graph $G$:
\begin{enumerate}
\item $G$ has a normal spanning tree,
\item $G$ has a normal spanning tree for every choice of $r \in V(G)$  as the root, and
\item $V(G)$ is a countable union of dispersed sets.
\end{enumerate}
\end{theorem}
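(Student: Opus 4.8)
The plan is to prove the theorem through the cycle $(2)\Rightarrow(1)\Rightarrow(3)\Rightarrow(2)$, which gives the full equivalence (and in particular the change-of-root implication $(1)\Rightarrow(2)$ needed later). The implication $(2)\Rightarrow(1)$ is immediate, since $(2)$ asserts the conclusion of $(1)$ for one particular choice of root. For $(1)\Rightarrow(3)$, fix a normal spanning tree $T$ with root $r$ and let $V_n$ be its $n$-th level (the vertices at tree-distance $n$ from $r$). As $T$ is an ordinary graph-theoretic tree, every vertex lies at finite distance from $r$, so $V(G)=\bigcup_{n\in\N}V_n$ is a countable partition, and it suffices to show each $V_n$ is dispersed. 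The key tool is the separation property of normal trees: writing $\lceil t\rceil=\Set{s:s\le t}$ for the (finite) root-to-$t$ path and $C_t=\Set{v:v\ge t}$ for the up-cone, normality forces $N(C_t)\subseteq\lceil t\rceil\setminus\Set{t}$, because any $G$-edge leaving $C_t$ joins comparable endpoints and hence must descend strictly below $t$.

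To finish $(1)\Rightarrow(3)$, let $R$ be any ray and set $A=\Set{t\in T : C_t \text{ contains a tail of } R}$. I would check that $A$ is a down-closed chain: it is down-closed since $C_s\supseteq C_t$ whenever $s\le t$, and it is a chain because two up-cones meet only when their tips are comparable, while a tail of $R$ witnesses that the relevant cones do meet. Moreover $A$ has no maximum: if $t$ were maximal then a tail of $R$ would lie in $C_t$ but in no cone $C_s$ with $s>t$, yet moving between the cones of two distinct children of $t$ requires (by the separation property) passing through $t$, which a ray does at most once, so in fact a tail must lie inside a single child-cone $C_s$, contradicting maximality. Thus $A$ is an infinite branch; its levels are unbounded, so choosing $t\in A$ with $\ell(t)>n$ gives a tail of $R$ inside $C_t$ with $C_t\intersect V_n=\emptyset$, and the finite set $\lceil t\rceil\setminus\Set{t}$ separates that tail from $V_n$. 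Hence $V_n$ is dispersed.

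The substantial direction is $(3)\Rightarrow(2)$. Given $V(G)=\bigcup_{n}U_n$ with each $U_n$ dispersed and an arbitrary root $r$ (which we place in $U_0$), I would build the desired tree as an increasing union $T=\bigcup_n T_n$ of normal trees rooted at $r$, maintaining two invariants: $U_0\union\cdots\union U_n\subseteq V(T_n)$, and every component of $G-T_n$ has finite neighbourhood (automatically a chain, by normality) in $T_n$. These invariants make $T$ normal (an increasing union of normal trees sharing a root) and spanning (each vertex lies in some $U_n$, hence in $T_n$). The step $T_n\mapsto T_{n+1}$ treats each component $C$ of $G-T_n$ separately: letting $t_C$ be the top of the finite chain $N(C)$, one attaches above $t_C$ a normal tree $S_C$ of $C$ that captures $U_{n+1}\intersect C$ and leaves behind only components with finite neighbourhoods. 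All the difficulty is concentrated in the existence of such $S_C$, which I would isolate as the following \emph{Key Lemma}: if $C$ is connected and $U\subseteq V(C)$ is dispersed in $G$, then $C$ has a normal tree $S$ with $U\subseteq V(S)$ such that every component of $C-S$ has finite neighbourhood in $S$.

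I would prove the Key Lemma by a transfinite recursion that enumerates $U$ and, at each stage, extends the current normal tree by attaching above the top of the relevant component's finite neighbourhood chain a finite path reaching the next not-yet-captured element of $U$, taking unions at limits; normality is preserved at every step by construction. The crux — and the step I expect to be the main obstacle — is verifying that the final components have finite neighbourhoods. If a component $D$ of $C-S$ had an infinite neighbourhood $t_0<t_1<\cdots$, then linking consecutive $t_i$ by paths through $D$ would yield a ray that no finite vertex set separates from $U$, since the $t_i$ were themselves placed on the tree precisely while chasing vertices of $U$; this contradicts that $U$ is dispersed. Arranging the recursion so that a persistent infinite neighbourhood genuinely encodes such an unseparable ray is the technical heart of the whole argument. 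It is here that dispersedness is indispensable: without it no such tree need exist, as an uncountable complete graph admits no normal tree at all whose component neighbourhoods are finite.
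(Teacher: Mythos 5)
The paper does not prove Jung's theorem at all --- it is quoted as a known result, with references to Jung's original article and to a short proof in \cite{pitz2020} --- so your proposal can only be measured against the standard proof. Against that yardstick, your overall architecture is the right one: $(2)\Rightarrow(1)$ is trivial, your $(1)\Rightarrow(3)$ is correct and essentially the textbook argument (levels of a normal spanning tree are dispersed because every ray follows a branch, which your down-closed chain $A$ without a maximum makes precise), and your plan for $(3)\Rightarrow(2)$ --- recursively absorbing the dispersed sets $U_n$ into an increasing chain of normal trees while keeping all component neighbourhoods finite --- is the standard route.

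The genuine gap is exactly where you say you expect it, and it is not a removable technicality: your Key Lemma is where the entire content of the theorem lives, and your sketch does not prove it. The naive recursion (enumerate $U$, attach a finite path to the next uncaptured vertex of $U$, take unions at limits) gives no control over component neighbourhoods, and your contradiction argument is not yet an argument: from an infinite chain $t_0 < t_1 < \cdots$ in $N(D)$ you can indeed extract a ray through $D$, but to contradict dispersedness you must show that \emph{no} finite set $S$ separates a tail of that ray from $U$, and ``the $t_i$ were placed while chasing $U$'' does not yield this --- a priori the $t_i$ could all have been added en route to finitely many $U$-vertices, and a single well-placed finite $S$ on a branch could cut every connecting path. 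The standard repair is to prove the sharper lemma that $U$ is dispersed if and only if $U$ is contained in a \emph{rayless} normal tree of $G$: raylessness makes every chain finite, so the finite-neighbourhood property of components becomes automatic, and the implication ``dispersed $\Rightarrow$ rayless'' is precisely where your unseparable-ray idea is made rigorous, using the invariant (which your recursion must be arranged to maintain) that every tree vertex lies below some vertex of $U$ in the tree order. One further repair: your parenthetical claim that $N(C)$ is ``automatically a chain, by normality'' is false under the paper's edge-only definition of normality once the trees $T_n$ are not spanning (take $T$ the rooted path $a$--$r$--$b$ and an outside vertex $x$ adjacent to both $a$ and $b$: edge-normality holds vacuously, yet $N(\{x\})=\{a,b\}$ is an antichain); for partial normal trees you must use the stronger standard definition requiring the ends of every path with all inner vertices outside $T$ to be comparable, and verify that your extension step preserves it.
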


\subsection{Normal tree orders and \texorpdfstring{$T$}{T}-graphs}

A partially ordered set $(T,\le)$ is called an \emph{order tree} if it has a unique minimal element (called the \emph{root}) and all subsets of the form $\lceil t \rceil = \lceil t \rceil_T := \set{t' \in T}:{t'\le t}$
are well-ordered. Our earlier partial ordering on the vertex set of
a rooted graph-theoretic tree is an order tree in this sense.

Let $T$ be an order tree. A~maximal chain in~$T$ is called a \emph{branch}
of~$T$; note that every branch inherits a well-ordering from~$T$. The
\emph{height} of~$T$ is the supremum of the order types of its branches. The
\emph{height} of a point $t\in T$ is the order type of~$\mathring{\lceil t \rceil}  :=
\lceil t \rceil  \setminus \{t\}$. The set $T^i$ of all points at height $i$ is
the $i$th \emph{level} of~$T$, and we
write $T^{<i} := \bigcup\set{T^j}:{j < i}$.

The intuitive interpretation of a tree order as expressing height will also
be used informally. For example, we may say that $t$ is \emph{above}~$t'$
if $t > t'$, call $\lceil X \rceil = \lceil X \rceil _T := \bigcup \set{\lceil x \rceil}:{x\in
X}$ the \emph{down-closure} of~$X\sub T$. And we say that $X$ is \emph{down-closed}, or $X$ is a \emph{rooted subtree}, if $X=\lceil X \rceil $.

An order tree $T$ is \emph{normal} in a graph $G$, if $V(G) = T$
and the two ends of any edge of $G$ are comparable in~$T$. We call $G$ a
\emph{$T$-graph} if $T$ is normal in $G$ and the set of lower neighbours of
any point $t$ is cofinal in $\mathring{\lceil t \rceil}$. 
For later use recall down the following standard results about $T$-graphs, and refer the reader to \cite[\S2]{brochet1994normal} for details.

\begin{lemma}
\label{lem_Tgraphproperties}
Let $(T,\leq)$ be an order tree and $G$ a $T$-graph.
\begin{enumerate}
\item \label{itemT1} For incomparable vertices $t,t'$ in $T$, the set $\downcl{t} \cap \downcl{t'}$ separates $t$ from $t'$ in $G$.
\item \label{itemT2} Every connected subgraph of $G$ has a unique $T$-minimal element.
\item \label{itemT4} Every subgraph of $G$ induced by an up-set $\upcl{t}$ is  connected.
\item \label{itemT3} If $T' \subset T$ is down-closed, the components of $G - T'$ are spanned by the sets $\upcl{t}$ for $t$ minimal in $T-T'$.
\end{enumerate}
\end{lemma}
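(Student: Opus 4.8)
The first three parts are general facts about orders trees normal in $G$ (only \ref{itemT4} will use the cofinality clause of the $T$-graph definition), and the last assembles them. I plan to treat them in the order \ref{itemT1}, \ref{itemT2}, \ref{itemT4}, \ref{itemT3}.

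For \ref{itemT1}, I would argue by contradiction: suppose some $t$--$t'$ path $P = v_0 v_1 \cdots v_n$ avoids $S := \downcl{t} \intersect \downcl{t'}$. Each set $\downcl{v}$ is well-ordered, hence a chain, so comparable vertices are ordered by height. Pick a vertex $w$ of $P$ of minimal height. Its neighbours on $P$ are comparable to $w$ and of height at least that of $w$, hence lie strictly above $w$; walking outward from $w$ along $P$ in both directions, I would show inductively that \emph{every} other vertex of $P$ lies above $w$. The key step: whenever a move $u \mapsto u'$ goes downward, $u'$ and $w$ both lie in the chain $\downcl{u}$, so they are comparable, and minimality of the height of $w$ together with $u' \neq w$ forces $w < u'$. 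In particular $w \leq t$ and $w \leq t'$, so $w \in S \intersect V(P)$, contradicting the choice of $P$. Note this uses only the comparability of the ends of edges.

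For \ref{itemT2}, existence of a minimal element of a connected $H \subseteq G$ follows from well-foundedness of $T$ (an infinite descending chain would sit inside a single well-ordered $\downcl{t}$), so $V(H)$ has a $\leq$-minimal element. For uniqueness, two distinct minimal elements $t,t'$ must be incomparable, so by \ref{itemT1} the set $\downcl{t}\intersect\downcl{t'}$ separates them in $G$; a $t$--$t'$ path inside the connected $H$ would meet this separator in some $w$ with $w \leq t$ and $w \leq t'$, and minimality of $t$ forces $w = t \leq t'$, contradicting incomparability. For \ref{itemT4} — the one place the cofinality hypothesis is essential — I would prove by well-founded induction on $v \in \upcl{t}$ that $v$ is joined to $t$ within $\upcl{t}$. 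The base case $v=t$ is trivial; for $v > t$, applying cofinality of the lower neighbours of $v$ in $\mathring{\downcl{v}}$ to the element $t \in \mathring{\downcl{v}}$ yields a neighbour $s$ of $v$ with $t \leq s < v$, whereupon the induction hypothesis connects $s$ to $t$ inside $\upcl{t}$ and the edge $sv$ (both ends $\geq t$) extends this to $v$.

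Finally, for \ref{itemT3} note that $T - T'$ is an up-set, so for each element $t$ minimal in $T-T'$ we have $\upcl{t} \subseteq T - T'$, and $G[\upcl{t}]$ is connected by \ref{itemT4}. I would then verify three points: the sets $\upcl{t}$ are pairwise disjoint (a common $v$ would place two distinct minimal elements into the chain $\downcl{v}$, forcing them comparable), they cover $T - T'$ (for $v \in T-T'$ take the $\leq$-minimum of the nonempty well-ordered set $\downcl{v} \intersect (T-T')$), and no edge of $G - T'$ runs between distinct up-sets (an edge $vw$ with $v < w$ and $v \in \upcl{t}$ satisfies $w \geq v \geq t$, so $w \in \upcl{t}$ too). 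Together these show that each $\upcl{t}$ spans exactly one component of $G - T'$. The main obstacle is the outward induction in \ref{itemT1} showing the minimal-height vertex of a path lies below both endpoints, and the transfinite induction in \ref{itemT4}; everything else is bookkeeping once these two are in hand.
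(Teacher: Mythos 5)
Your proposal is correct in all four parts: the minimal-height-vertex argument for the separation property, well-foundedness plus part (1) for uniqueness of minimal elements, transfinite induction via the cofinality clause for connectedness of up-sets, and the partition bookkeeping for the components all go through as written. The paper itself gives no proof of this lemma, deferring to Brochet and Diestel (\emph{Normal tree orders for infinite graphs}, \S2), and your arguments are essentially the standard ones found there, so there is nothing to reconcile.
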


\subsection{Stationary sets and Fodor's lemma}
We denote ordinals by $i,j,k,\ell$, and identify $i = \set{j}:{j < i}$.
Let $\ell$ be any limit ordinal. A subset $A \subset \ell$ is \emph{unbounded} if $\sup A = \ell$, and \emph{closed} if $\sup (A \cap m) = m$ implies $m \in A$ for all limits $m < \ell$. The set $A$ is a \emph{club-set} in $\ell$ if it is both closed and unbounded.
A subset $S \subset \ell$ is \emph{stationary} (in $\ell$) if $S$ meets every club-set of $\ell$. For the following standard results about stationary sets see e.g.\ \cite[\S III.6]{Kunen}.

\begin{lemma}
\label{lem_stationary}
(1) If $\kappa$ is a regular uncountable cardinal, $S \subset \kappa$ is stationary and $S = \bigcup \set{S_n}:{i \in \N}$, then some $S_n$ is stationary.

(2) \emph{[Fodor's lemma]} If $\kappa$ is a regular uncountable cardinal, $S \subset \kappa$ stationary and $f \colon S \to \kappa$ is  such that $f(s)<s$ for all $s \in S$, then there is $i< \kappa$ such that $f^{-1}(i)$ is stationary. 
\end{lemma}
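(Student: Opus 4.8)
The plan is to deduce both statements from two standard closure properties of club-sets in a regular uncountable $\kappa$, and then to read off (1) and (2) as short contradiction arguments. The two properties I would isolate first are: (a) the intersection of at most countably many club-sets $C_n \sub \kappa$ is again a club-set; and (b) the \emph{diagonal intersection} $D := \Set{\alpha < \kappa \colon \alpha \in C_i \text{ for all } i < \alpha}$ of a $\kappa$-indexed family $(C_i)_{i<\kappa}$ of club-sets is again a club-set. For (a), closedness is immediate (an intersection of closed sets is closed), and for unboundedness I would, given $\alpha < \kappa$, build an increasing $\omega$-sequence $\alpha = \alpha_0 < \alpha_1 < \cdots$ by alternately chasing the unboundedness of the $C_n$ (cycling through every index infinitely often), so that the limit $\beta = \sup_k \alpha_k$ is a limit point of each $C_n$ and hence lies in each $C_n$ by closedness; the point $\beta$ stays below $\kappa$ precisely because $\kappa$ has uncountable cofinality. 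Property (b) is the normality of the club filter, whose closedness is again routine and whose unboundedness I discuss below.

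For part (1), I would argue by contradiction: suppose every $S_n$ is non-stationary, and for each $n$ fix a club-set $C_n$ with $C_n \cap S_n = \emptyset$. By (a), $C := \bigcap_n C_n$ is a club-set, and then $C \cap S = \bigcup_n (C \cap S_n) = \emptyset$, contradicting the stationarity of $S$. Hence some $S_n$ is stationary.

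For part (2), I would likewise argue by contradiction: suppose $f^{-1}(i)$ is non-stationary for every $i < \kappa$, and fix for each $i$ a club-set $C_i$ disjoint from $f^{-1}(i)$. Let $D$ be the diagonal intersection from (b), a club-set. Since $S$ is stationary we may pick $\alpha \in S \cap D$, and then $\alpha \in C_i$ for all $i < \alpha$. Setting $i := f(\alpha) < \alpha$, on the one hand $\alpha \in C_i$ by the choice of $\alpha$, while on the other $\alpha \in f^{-1}(i)$ by the definition of $i$; this contradicts $C_i \cap f^{-1}(i) = \emptyset$. Therefore $f^{-1}(i)$ is stationary for some $i < \kappa$.

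The main obstacle is entirely contained in the closure lemma, and specifically in the unboundedness half of (b): showing that the diagonal intersection $D$ is unbounded requires, for each target point, an $\omega$-length recursion that repeatedly moves above the current point inside every $C_i$ indexed below it, and one must verify that all the suprema formed along the way remain strictly below $\kappa$. This is exactly the step that breaks down when $\kappa$ is singular or merely uncountable without being regular, so the regularity hypothesis is indispensable here; once (a) and (b) are in hand, the rest is bookkeeping.
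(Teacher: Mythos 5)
Your proposal is correct, but note that the paper does not prove this lemma at all: it is stated as a standard fact with a citation to Kunen (\S III.6), so there is no in-paper proof to diverge from. Your argument is exactly the canonical one from that reference -- closure of the club filter under countable intersections for (1), and normality via diagonal intersections for (2) -- and you correctly identify that regularity of $\kappa$ is what keeps the suprema in the unboundedness arguments below $\kappa$ (indeed, the auxiliary fact you use, that $\bigcap_{i<\alpha} C_i$ is club for $\alpha < \kappa$, is where regularity enters for the diagonal intersection as well).
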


\section{A new obstruction for normal spanning trees of size \texorpdfstring{$\aleph_1$}{aleph1}}
\label{sec_counterexamples}

In this section we encounter a third obstruction for normal spanning trees -- a graph without normal spanning tree, but also without $(\aleph_0,\aleph_1)$-graph and Aronszajn-tree graph as a minor.

Consider the $\omega_1$-regular tree with tops, i.e.\ the order tree $(T,\leq)$ where the nodes of $T$ are all sequences of elements of $\omega_1$ of length $\leq\omega$, including the empty sequence. Let $t \leq t'$ if $t$ is a proper initial segment of $t'$, and let $G_{\omega_1}$ be any $T$-graph. 

Given a set $S \subset \omega_1$ of limit ordinals, choose for each $s \in S$ a cofinal (not necessarily increasing) sequence $f_s \colon \N \to s$, and let $F=F(S) = \set{f_s}:{s \in S}$ be the corresponding collection of sequences in $\omega_1$. Let $T(S)$ denote the subtree of $T$ given by all finite sequences in $T$ together with $F(S)$, and let $G_{\omega_1}(S)$ denote the corresponding induced subgraph of $G_{\omega_1}$. 

To our knowledge, such a collection $F(S) = \set{f_s}:{s \in S}$ of tree branches was first considered by Stone in \cite[\S5]{stone1963sigma} where it is shown that $F$ is not Borel in the end space of $G_{\omega_1}[T^{{<}\omega}]$.

\begin{theorem}
\label{thm_counter}
Let $S \subset \omega_1$ be stationary. Then $G_{\omega_1}(S)$ does not have a normal spanning tree, despite the fact that it contains neither an $(\aleph_0,\aleph_1)$-graph nor an Aronszajn-tree graph as a minor.
\end{theorem}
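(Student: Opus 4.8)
The plan is to prove the three assertions separately, all resting on one common feature of the $T$-graph $G:=G_{\omega_1}(S)$ on the order tree $T(S)$, which has height $\omega+1$. Since $T(S)$ is normal in $G$, every edge of $G$ joins two comparable vertices, so by Lemma~\ref{lem_Tgraphproperties} every connected subgraph $W$ has a unique minimum $\min W$ (part~\ref{itemT2}). I also record the elementary facts, used throughout, that every non-root finite sequence is adjacent to its immediate predecessor (the predecessor is the maximum of the finite set $\mathring{\downcl{t}}$, in which the lower neighbours are cofinal), so the initial segments of any branch form a ray, and that each top $f_s$ is adjacent to cofinally many of its own initial segments.

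\emph{No normal spanning tree.} By Theorem~\ref{thm_Jung} $((1)\Leftrightarrow(3))$ it suffices to show that $T(S)$ is not a countable union of dispersed sets. For any cover $T(S)=\bigcup_n U_n$ by dispersed sets we have $S=\bigcup_n\{s:f_s\in U_n\}$, so by Lemma~\ref{lem_stationary}(1) it is enough to prove: \emph{if $U$ is dispersed then $\{s\in S:f_s\in U\}$ is non-stationary.} Assuming the contrary, I apply Fodor's lemma repeatedly to the regressive maps $s\mapsto f_s(n)$, obtaining ordinals $\alpha_n$ and a decreasing chain of stationary sets $S^{(n)}$ with $f_s\restriction n=\langle\alpha_0,\dots,\alpha_{n-1}\rangle=:p_n$ for all $s\in S^{(n)}$. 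The $p_n$ form a ray $R$, since consecutive initial segments are adjacent. Given any finite $Z$, I choose $n$ larger than the length of every finite sequence in $Z$ and some $s\in S^{(n)}$ with $f_s\notin Z$; then $p_n\le f_s$, and climbing from $p_n$ through the initial segments of $f_s$ to $f_s$ itself (via a branch-neighbour of $f_s$ of level $\ge n$) gives an $R$--$U$ path avoiding $Z$. Hence no finite set separates $R$ from $U$, so $U$ is not dispersed.

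\emph{Common set-up for minors.} A minor $H\preceq G$ with branch sets $(V_x)$ yields an injection $x\mapsto m_x:=\min V_x$ into $T(S)$ with the property that $xy\in E(H)$ forces $m_x,m_y$ comparable: the connecting $G$-edge $uv$ ($u\in V_x$, $v\in V_y$) has comparable ends, say $u\le v$, whence $m_x\le u\le v$ and $m_y\le v$ place both into the chain $\downcl{v}$. Here every $\downcl{t}$ with $t$ a finite sequence is finite, and every $\downcl{f_s}$ is countable.

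\emph{No $(\aleph_0,\aleph_1)$-graph.} Suppose $H=(A,B)$ embeds as a minor, and set $M_A=\{m_a:a\in A\}$, which is countable. Each $b\in B$ has infinitely many neighbours $a$ with $m_a$ comparable to $m_b$, so either infinitely many satisfy $m_a<m_b$ (put $b\in B_L$) or only finitely many do, forcing infinitely many with $m_a>m_b$ (put $b\in B_U$); as $\lvert B\rvert=\aleph_1$, one of $B_L,B_U$ is uncountable. If $B_U$ is uncountable, choosing for each $b$ a neighbour $a(b)$ with $m_{a(b)}>m_b$ and pigeonholing the countable set $A$ gives a single $a^*$ with uncountably many distinct $m_b\in\downcl{m_{a^*}}$ -- impossible, as $\downcl{m_{a^*}}$ is countable. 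If $B_L$ is uncountable, each $m_b$ has infinitely many initial segments in the countable set $M_A$, so $m_b$ is a top whose whole branch lies in the countable tree $\hat T:=\downcl{M_A}$; since the ordinals occurring in $\hat T$ are bounded by some $\delta<\omega_1$, each such top has supremum of entries $\le\delta$, contradicting that uncountably many distinct tops $m_b=f_{s(b)}$ with distinct $s(b)$ arise.

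\emph{No Aronszajn-tree graph (the main obstacle).} Here $R\preceq G$ is a $\script{T}$-graph on an Aronszajn tree $\script{T}$. Applying Lemma~\ref{lem_Tgraphproperties} to $R$ (up-sets induce connected subgraphs, part~\ref{itemT4}), each $W_x:=\bigcup_{y\ge x}V_y$ is connected in $G$, so $\rho(x):=\min W_x\in T(S)$ is defined; since $x\le x'$ gives $W_{x'}\subseteq W_x$, the map $\rho\colon\script{T}\to T(S)$ is $\le$-monotone into a tree of height $\omega+1$. The idea is to contradict that $\script{T}$ has height $\omega_1$ and no uncountable chain. A first consequence is that every fibre $\rho^{-1}(v)$ is countable: if $\rho(x)=v$ then $v\in W_x$, so $v\in V_{y_0}$ for a \emph{single} $y_0$ (branch sets are disjoint), forcing $x\le y_0$ and hence $\rho^{-1}(v)\subseteq\downcl{y_0}$, a countable chain. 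Thus the image $\rho(\script{T})$ is uncountable, and since $T(S)$ has only $\omega+1$ levels, uncountably many image points lie on one level -- either a finite level, or the tops. The finishing step is to rule out each case, and this is where the real work lies: the finite-level case should be closed by a bounded-entries argument as above, while uncountably many image \emph{tops} force the corresponding cones $W_x$ to be single vertices, i.e.\ uncountably many maximal nodes of $\script{T}$, which I expect to contradict disjointness of branch sets together with the cofinal-down-neighbourhood property at limit nodes of $\script{T}$. Controlling $\rho$ at such limit nodes, where $\min W_x$ may jump above the minima below it, is the principal difficulty of the whole argument.
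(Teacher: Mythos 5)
Your first two parts are correct. The no-normal-spanning-tree argument via Jung's Theorem~\ref{thm_Jung} $(1)\Leftrightarrow(3)$ is a genuinely different route from the paper's Lemma~\ref{lem_stationarytops}, which instead pigeonholes the tops onto a single level of a hypothetical normal spanning tree $R$ and plays the separation property (Lemma~\ref{lem_Tgraphproperties}(\ref{itemT1}), applied to $R$) against $n+1$ common neighbours produced by the same iterated Fodor step you use; your dispersed-sets version is complete and correct, and your exclusion of $(\aleph_0,\aleph_1)$-minors, with the $B_L/B_U$ dichotomy and the bounded-entries count over $\downcl{M_A}$, is likewise sound and close in spirit to Lemma~\ref{lem_aleph0aleph_1} together with the remark following it.

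The third assertion, however, is not proved, and this is a genuine gap rather than a routine loose end: neither of your two anticipated finishes would work. In the tops case, ``uncountably many maximal nodes of $\script{T}$'' is not a contradiction -- an Aronszajn tree may well have $\aleph_1$ maximal nodes (attach a maximal successor above every node of any Aronszajn tree), and at a maximal node of successor height the $\script{T}$-graph condition only demands adjacency to the immediate predecessor, so neither disjointness of branch sets nor the cofinal-down-neighbourhood property is violated. In the finite-level case there is no ``bounded-entries argument as above'' to run: the level $T^n$ of the $\omega_1$-regular tree is itself uncountable, and no countable set playing the role of $M_A$ is in sight. The missing idea (this is the paper's Lemma~\ref{lem_noAron}) is to work with the minima $t_v$ of the branch sets themselves rather than your cone-minima $\rho(x)$, and to choose the level $\alpha \leq \omega$ of $T(S)$ carrying uncountably many $t_v$ \emph{minimal}: then $X = \set{v}:{t_v \in T^{<\alpha}}$ is countable, and by Lemma~\ref{lem_Tgraphproperties}(\ref{itemT1}) deleting $X$ from $H$ pairwise separates the uncountably many vertices $v$ with $t_v \in T^\alpha$, since for incomparable $t_u, t_v \in T^\alpha$ the separator $\downcl{t_u} \cap \downcl{t_v}$ lies in $T^{<\alpha}$ while branch sets of vertices outside $X$ avoid $T^{<\alpha}$. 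Now the Aronszajn structure of $\script{T}$ enters exactly where your sketch stalls: $X \subset \script{T}^{<\beta}$ for some countable $\beta$, the level $\script{T}^{\beta}$ is countable, and by Lemma~\ref{lem_Tgraphproperties}(\ref{itemT4}) all but countably many vertices of $H - X$ lie in one of the countably many connected up-sets $\upcl{t}$, $t \in \script{T}^{\beta}$, so some such up-set contains two of the pairwise separated vertices -- a contradiction. Your monotone map $\rho$ with countable fibres is a reasonable observation, but by itself it yields no contradiction (nothing in ZFC forbids a monotone, countable-to-one map from an Aronszajn tree into a tree of height $\omega+1$); the separation step at a minimal level is what the argument actually needs.
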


The three assertions of Theorem~\ref{thm_counter} are divided into Lemmas~\ref{lem_stationarytops}, \ref{lem_aleph0aleph_1} and \ref{lem_noAron} below.

\begin{lemma}
\label{lem_stationarytops}
Let $S \subset \omega_1$ be stationary. Then $G_{\omega_1}(S)$ does not have a normal spanning tree.
\end{lemma}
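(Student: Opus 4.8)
The plan is to use Jung's criterion (Theorem~\ref{thm_Jung}): to show $G_{\omega_1}(S)$ has no normal spanning tree, it suffices to show that $V(G_{\omega_1}(S))$ cannot be written as a countable union of dispersed sets. The vertex set naturally splits into the countable-height part $T^{<\omega}$ (all finite sequences) and the ``tops'' $F(S) = \{f_s : s \in S\}$. The finite part $T^{<\omega}$ is countable, so it is trivially a countable union of (singleton) dispersed sets and presents no obstruction. The whole difficulty is therefore concentrated in the $\aleph_1$-many top vertices indexed by the stationary set $S$: I must show that the $f_s$ cannot be covered by countably many dispersed sets.

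So suppose for contradiction that $F(S) = \bigcup_{n \in \N} D_n$ with each $D_n$ dispersed in $G_{\omega_1}(S)$. The plan is to feed this decomposition into the stationarity of $S$. Identifying each top $f_s$ with the ordinal $s \in S$, I get $S = \bigcup_{n \in \N} S_n$ where $S_n = \{s \in S : f_s \in D_n\}$, and by Lemma~\ref{lem_stationary}(1) some $S_n$ is stationary; fix such an $n$ and write $D = D_n$, $S' = S_n$. Now I want to extract a ray from $G_{\omega_1}(S)$ that cannot be finitely separated from $D$, contradicting dispersedness. The natural candidate ray is the ``spine'' along the constant-$0$ branch, or more usefully a ray running up through finite sequences: the key geometric fact I plan to exploit is that each top $f_s$ is the limit of its own initial segments $f_s{\restriction}n$, which are finite sequences lying below $f_s$ in $T$, and these become cofinal in $\downcl{f_s}\setminus\{f_s\}$ since $G_{\omega_1}(S)$ is a $T$-graph.

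The heart of the argument, and the step I expect to be the main obstacle, is converting ``$S'$ stationary'' into ``$D$ is not dispersed.'' A finite separator $X$ for a ray must, via Lemma~\ref{lem_Tgraphproperties}\eqref{itemT1} and \eqref{itemT3}, correspond to a finite down-closed cut, and I plan to argue that no finite (hence bounded) set of finite sequences can separate a fixed ray from all but countably many of the tops $f_s$ with $s \in S'$. Concretely, I expect to build a single ray $R$ in $G_{\omega_1}[T^{<\omega}]$ whose trace stays ``closed'' in the sense that the set of heights $s$ such that $f_s$ is eventually trapped on the same side as $R$ forms a club; stationarity of $S'$ then guarantees $S'$ meets this club, placing cofinally many $f_s \in D \cap S'$ arbitrarily close to $R$ past every finite separator. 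The technical care will be in choosing $R$ (or a tree of rays) so that the associated set of ordinals is genuinely closed and unbounded: I anticipate defining, for each finite separator budget, a closure operation on ordinals that records ``where the separated-off part must climb to,'' and using a diagonal or pressing-down (Fodor, Lemma~\ref{lem_stationary}(2)) argument to locate a single height at which infinitely many tops of $S'$ converge onto one ray. Making this closure genuinely a club-set, so that stationarity bites, is where the real work lies.
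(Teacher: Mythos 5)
Your overall route---Jung's Theorem~\ref{thm_Jung}, reduction via Lemma~\ref{lem_stationary}(1) to a single dispersed set $D=\{f_s : s \in S'\}$ with $S'$ stationary, then exhibiting a ray that no finite vertex set separates from $D$---is viable, and it is genuinely different from the paper's argument. The paper never mentions dispersed sets here: it assumes a normal spanning tree $R$ exists, uses Lemma~\ref{lem_stationary}(1) to find a stationary $S'$ with all tops $f_s$, $s \in S'$, on a single level $R^n$, observes via Lemma~\ref{lem_Tgraphproperties}(\ref{itemT1}) that any two such tops are separated by at most $n$ vertices, and then uses Lemma~\ref{lem_stationary}(1) once more plus $m$ iterated applications of Fodor's lemma to produce stationarily many tops sharing $n+1$ common neighbours---an immediate counting contradiction. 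Both proofs run on the same set-theoretic fuel (decomposing stationary sets countably, plus pressing down); yours trades the paper's clean counting contradiction for the work of certifying a concrete non-dispersed set.

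However, your proposal has a genuine gap exactly where you admit ``the real work lies,'' and the mechanism you sketch there is not the right one. The set of heights $s$ such that $f_s$ is ``eventually trapped on the same side as $R$'' has no reason to be closed or to contain a club: for a fixed branch $b$, a set of the form $\{s \in S' : f_s \restriction k = b \restriction k\}$ is in general only stationary, with stationary complement, so ``$S'$ meets a club'' is not the available lever. Likewise, ``a single height at which infinitely many tops converge onto one ray'' is too weak: since the finite separator $X$ is arbitrary, you need tops of $S'$ agreeing with your ray up to \emph{every} finite height $k$, not up to one. The correct implementation, which your mention of pressing down is circling, is iterated Fodor: since each map $s \mapsto f_s(k)$ is regressive on $S'$, Lemma~\ref{lem_stationary}(2) yields values $v_0, v_1, \ldots$ and a decreasing chain of stationary sets $S' \supseteq S_0 \supseteq S_1 \supseteq \cdots$ with $f_s(i) = v_i$ for all $s \in S_k$ and $i \leq k$. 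Set $b = (v_0, v_1, \ldots)$ and take the ray $b\restriction 0,\, b\restriction 1,\, b\restriction 2, \ldots$ (in any $T$-graph each node at finite height is adjacent to its immediate predecessor, since cofinality in a finite chain forces it). Given a finite $X$, choose $k$ so large that no element of $X$ lies in $\upcl{b \restriction k}$ except possibly finitely many tops; then $\upcl{b \restriction k} \cap T^{<\omega}$ is connected, contains the tail of the ray, and every top $f_s$ with $s \in S_k$ has infinitely many neighbours inside it, so the tail component of $G_{\omega_1}(S) - X$ contains uncountably many tops of $D$, and $D$ is not dispersed. With this lemma supplied, your proof goes through; without it, the proposal remains a plan rather than a proof.
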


\begin{proof}
Suppose for a contradiction that $G_{\omega_1}(S)$ has a normal spanning tree $R$. By Lemma~\ref{lem_stationary}(1), for some level $n \in \N$ the set $S' = \set{s \in S}:{f_s \in R^n}$ is stationary. By  Lemma~\ref{lem_Tgraphproperties}(\ref{itemT1}), any two vertices $f_s \neq f_{s'}$ in $F(S')$ can be 
separated by $\downcl{f_s}_R \cap \downcl{f_{s'}}_R$, a set of at most $n$ vertices. 

However, by Lemma~\ref{lem_stationary}(1), there is a stationary subset $S'' \subset S'$ and $m \in \N$ such that the first $n+1$ neighbours of $f_s$ for $s \in S''$ are contained on exactly the same levels  of $T^{\leq m}$. After applying Fodor's Lemma~\ref{lem_stationary}(2) iteratively $m$ times, we get a stationary subset $S''' \subset S''$ such that $f_s(i) = f_{s'}(i) $ for all $s,s' \in S'''$ and $i \leq m$. So the vertices in $F(S''')$ have at least $n+1$ common neighbours, a contradiction.
 \end{proof}

\begin{lemma}
\label{lem_aleph0aleph_1}
If an $(\aleph_0,\aleph_1)$-graph $H$ is a minor of some $T$-graph $G$ with $\operatorname{height}(T) = \omega + 1$, then some countable subtree of $T^{{<}\omega}$ has uncountably many tops.
\end{lemma}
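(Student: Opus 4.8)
The plan is to read off the minor from the minimal elements of its branch sets. By Lemma~\ref{lem_Tgraphproperties}(\ref{itemT2}) every branch set $V_x$ has a unique $T$-minimal element $r_x$; applying Lemma~\ref{lem_Tgraphproperties}(\ref{itemT3}) to the down-closed set $\mathring{\downcl{r_x}}$ then yields $V_x \subseteq \upcl{r_x}$. As the branch sets are pairwise disjoint, the roots $r_x$ are pairwise distinct. I will write $R_A = \set{r_a}:{a \in A}$ for the countable set of $A$-roots and $D = \downcl{R_A} = \bigcup_{a\in A}\downcl{r_a}$ for its down-closure; since each $\downcl{r_a}$ is a chain of order type $\le \omega+1$, the set $D$ is a countable, down-closed subtree of $T$, and $D \cap T^{{<}\omega}$ is a countable subtree of $T^{{<}\omega}$. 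My claim is that $D\cap T^{{<}\omega}$ is the subtree the lemma asks for.

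The first key observation is that adjacency in $H$ forces comparability of roots: if $b \sim a$, then $G$ has an edge $uv$ with $u\in V_b$ and $v\in V_a$, its ends are comparable in $T$ (as $G$ is a $T$-graph), and since $u\ge r_b$ and $v\ge r_a$ the larger of $u,v$ lies above both roots, so $r_a$ and $r_b$ are comparable. I then split $B$ by the direction in which an infinite part of the neighbourhood sits: let $B_{\downarrow}$ be the set of $b$ having infinitely many neighbours $a$ with $r_a < r_b$, and put $B_{\uparrow} = B\setminus B_{\downarrow}$. Because every $b$ has infinite degree, each $b\in B_{\uparrow}$ has some neighbour $a$ with $r_a > r_b$.

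The set $B_{\uparrow}$ turns out to be negligible: for $b\in B_{\uparrow}$ choose a neighbour $a$ with $r_b < r_a$, whence $r_b \in \mathring{\downcl{r_a}}\subseteq D$. Thus the distinct roots $\set{r_b}:{b\in B_{\uparrow}}$ all lie in the countable set $D$, so $B_{\uparrow}$ is countable and $B_{\downarrow}$ is uncountable. For $b\in B_{\downarrow}$, the infinitely many neighbour-roots $r_a < r_b$ are distinct points of the well-ordered chain $\mathring{\downcl{r_b}}$, so that chain is infinite and $r_b$ must be a top of height $\omega$. Furthermore the heights of these $r_a$ form an infinite, hence cofinal, subset of $\omega$, so the initial segments $\downcl{r_a}$ exhaust the entire branch below $r_b$, giving $\mathring{\downcl{r_b}}\subseteq D\cap T^{{<}\omega}$. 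Hence each of the uncountably many distinct tops $\set{r_b}:{b\in B_{\downarrow}}$ is a top of the countable subtree $D\cap T^{{<}\omega}$, as required.

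The step I expect to be the crux is the cofinality argument in the last paragraph. It is not enough to know that the branch below $r_b$ contains infinitely many $A$-roots: I need these roots to be \emph{cofinal}, so that their down-closures cover the whole branch and thereby pin it inside the fixed countable skeleton $D$. This is precisely what upgrades ``uncountably many tops exist'' to ``uncountably many tops of one countable subtree'', and it is where the finiteness of the levels below $\omega$ is used, since an infinite set of heights is automatically cofinal in $\omega$.
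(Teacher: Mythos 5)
Your proof is correct and takes essentially the same route as the paper: extract the $T$-minimal roots of the branch sets via Lemma~\ref{lem_Tgraphproperties}(\ref{itemT2}), observe that adjacency in $H$ forces comparability of roots, and exhibit the down-closure of the countably many $A$-roots (restricted to $T^{<\omega}$) as the countable subtree, with the same infinite-hence-cofinal-in-$\omega$ argument pinning the branches below uncountably many $B$-roots inside it. The only difference is bookkeeping: the paper picks the first level of $T$ carrying uncountably many $B$-roots and splits into a finite-level case and a level-$\omega$ case, whereas you partition $B$ by whether infinitely many neighbour roots lie below, absorbing the paper's finite-level case into the countability of $B_{\uparrow}$ (whose roots inject into the countable skeleton $D$).
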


\begin{proof}
Let $H = (A,B)$ be an $(\aleph_0,\aleph_1)$-graph with $H \preceq G$. For $v \in V(H)$ write $t_v$ for the unique minimal node of the branch set of $v$ in $\mathcal{T}$, which exists by Lemma~\ref{lem_Tgraphproperties}(\ref{itemT2}). 
Observe that if $ab \in E(H)$, then $t_a$ and $t_b$ are comparable in $T$. 
Since $B$ is uncountable, 
there is a first level $\alpha \leq \omega$ such that $B' = \set{b \in B}:{t_b \in T^{\alpha}}$ is uncountable. 

Case 1: If $\alpha  = n < \omega$ is finite. Since every $b \in B'$ has infinite degree, there must be infinitely many $t_a$ comparable to such an $t_b \in T^n$. As $\lceil t_b \rceil$ is finite, it follows that every such $t_b$ for $b \in B'$ has some $t_a$ above it. But then all these $t_a$ are distinct, contradicting that $A$ is countable.

Case 2:  If $\alpha  = \omega$. Then the branch sets of $b \in B'$ are $\Set{t_b} \subset T^{\omega}$. And $\set{t_a}:{a \in N(b)}$ forms an infinite, and hence cofinal chain below $t_b$ for any $b \in B'$. Since $A$ is countable, $\lceil A \cap T^{<\omega} \rceil$ is a countable subtree of $T^{<\omega}$ that picks up uncountable many tops $t_b$ for $b \in B'$, a contradiction.
\end{proof}

To see that Lemma~\ref{lem_aleph0aleph_1} applies to $G_{\omega_1}(S)$, note that any countably subtree of $T$ contains only sequences with values in $i$ for some $i < \omega_1$. But then no $f_s$ for $s > i$ is a top of that subtree: since the sequence $f_s$ is cofinal in $s$, we have $f_s(n) \geq  i$ for at least one $n \in \N$.

\begin{lemma}
\label{lem_noAron}
If an Aronszajn tree graph $H$ is a minor of some $T$-graph $G$, then $\operatorname{height}(T) \geq \omega_1$.
\end{lemma}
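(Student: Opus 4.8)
The plan is to argue by contradiction: assume the Aronszajn tree graph $H$, whose vertex set is the Aronszajn tree $\mathcal{T}$, is a minor of a $T$-graph $G$ with $\operatorname{height}(T)=\gamma<\omega_1$, and derive a contradiction with $\mathcal{T}$ having height $\omega_1$. First I would fix a minor model $(V_v)_{v\in\mathcal{T}}$; since $G$ is connected I may enlarge the branch sets so that they \emph{partition} $V(G)$ (this keeps them connected and only adds edges of $G$, which a minor model tolerates). The point of insisting on a partition is that separators then transfer: if $Z\subseteq V(H)$ separates two vertices of $H$, then $\bigcup_{z\in Z}V_z$ separates the corresponding branch sets in $G$, and conversely a $G$-separator meets only the branch sets of an $H$-separator. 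By Lemma~\ref{lem_Tgraphproperties}(\ref{itemT2}) each $V_v$ has a unique $T$-minimal vertex $t_v=\min_T V_v$, and (as in the proof of Lemma~\ref{lem_aleph0aleph_1}) $V_v\subseteq\upcl{t_v}_T$, the map $v\mapsto t_v$ is injective, and $t_u,t_v$ are $T$-comparable whenever $uv\in E(H)$.

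The main positive tool I would set up is the \emph{up-set minimum} map $\tau\colon\mathcal{T}\to T$, $\tau(v)=\min_T\bigcup_{w\geq_{\mathcal T}v}V_w$; this minimum exists because $\upcl{v}_{\mathcal T}$ induces a connected subgraph of $H$ by Lemma~\ref{lem_Tgraphproperties}(\ref{itemT4}), hence a connected subgraph of $G$. One checks that $\tau$ is \emph{order preserving} ($u\leq_{\mathcal T}v\Rightarrow\tau(u)\leq_T\tau(v)$, since the minimum of a connected set lies below the whole set), that $\tau(v)\leq_T t_v$, and — crucially — that each fibre $\tau^{-1}(s)$ is a chain in $\mathcal{T}$ with a largest element $w_s$ satisfying $t_{w_s}=s$, so that the section $s\mapsto w_s$ sends $T$-incomparable elements to $\mathcal{T}$-incomparable ones. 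Consequently the image $\mathcal{T}^{\ast}=\tau[\mathcal{T}]\subseteq T$ is an order tree of size $\aleph_1$ all of whose chains have order type $\leq\gamma$. If some chain of $\mathcal{T}^{\ast}$ had order type $\geq\omega_1$ we would be done immediately, so the real work is to rule out the alternative: that $\mathcal{T}^{\ast}$, and hence — pigeon-holing its $\aleph_1$ points into its countably many ($\leq\gamma$) levels — some level of it, is uncountable.

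In that alternative I would have an uncountable $T$-antichain $A^{\ast}\subseteq\mathcal{T}^{\ast}$: uncountably many branch sets $V_{w_s}$ ($s\in A^{\ast}$) whose minima sit at one fixed level $j<\gamma$ of $T$ and are pairwise $T$-incomparable, so by Lemma~\ref{lem_Tgraphproperties}(\ref{itemT1}) they are pairwise separated in $G$ by countable chains lying below level $j$. The nodes $w_s$ then form an uncountable antichain of $\mathcal{T}$, which (as all levels of $\mathcal{T}$ are countable) must reach arbitrarily high $\mathcal{T}$-levels; I would pick such $w_s$ at limit levels and exploit that their down-neighbourhoods are cofinal below them, the defining feature of an Aronszajn tree graph. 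The goal is then to press down with Fodor's Lemma~\ref{lem_stationary}(2), applied to the levels in $T$ at which the various separating chains stabilise, until uncountably many of the $w_s$ share their entire separation structure; this should force their branch sets to become non-separated, contradicting $T$-incomparability of their minima, in the same spirit in which Lemma~\ref{lem_stationarytops} manufactures too many common neighbours.

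The step I expect to be the main obstacle is exactly this last one, for a conceptual reason: tree height is \emph{not} preserved by minors. A single edge of $H$ may be realised by branch sets whose $T$-minima appear in the ``wrong'' order, so there is no order-preserving vertex map $\mathcal{T}\to T$ to transport height along; and if $\mathcal{T}$ is a Suslin tree, then the order-theoretic shape of $\mathcal{T}^{\ast}$ (uncountable level and all) \emph{can} be re-embedded into $\mathcal{T}$, so no argument relying only on comparabilities and antichains can succeed in ZFC. What must be used instead is the genuine graph-theoretic content of a minor — the connectedness of the branch sets together with the cofinality of the down-neighbourhoods — fed through Lemma~\ref{lem_Tgraphproperties}(\ref{itemT1}) and Fodor's lemma as in Lemma~\ref{lem_stationarytops}. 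Making this down-neighbour and separator bookkeeping uniform across uncountably many $w_s$ at once is the delicate heart of the proof; once the lemma is in hand it applies to $G_{\omega_1}(S)$ at once, since $\operatorname{height}(T(S))=\omega+1<\omega_1$.
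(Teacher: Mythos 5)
Your write-up does not actually prove the lemma: the decisive step --- deriving a contradiction from the uncountable set of pairwise $T$-incomparable minima $t_{w_s}$ on one level of $T$ --- is only gestured at (``press down with Fodor's Lemma \ldots this should force\ldots''), and you yourself flag it as the main obstacle. That intended route genuinely fails as sketched: in Lemma~\ref{lem_stationarytops} the pressing-down works because the normal spanning tree level $R^n$ is \emph{finite}, so the separators $\downcl{f_s}\cap\downcl{f_{s'}}$ have size at most $n$ and only finitely many coordinates need to be stabilised by finitely many applications of Fodor's lemma, after which ``$n+1$ common neighbours'' beats the bound $n$. Here, by contrast, your level $j$ of $T$ sits at countable height $<\gamma$, so the separators $\downcl{s}\cap\downcl{s'}$ are countably \emph{infinite} chains: there is no finite cardinality bound to violate, and no finite iteration of Fodor's lemma can uniformise infinitely much separation data across uncountably many $w_s$. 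A secondary flaw: your claim that after enlarging the branch sets to a partition ``separators transfer'' from $H$ to $G$ is false in the direction you state --- $G$ may have edges between branch sets $V_x, V_y$ with $xy\notin E(H)$, so a set $Z$ separating in $H$ need not have $\bigcup_{z\in Z}V_z$ separating in $G$. Only the push-forward direction is sound (an $H$-path avoiding $Z$ yields a connected subgraph of $G$ avoiding $\bigcup_{z\in Z}V_z$), and, fortunately, that is the only direction actually needed.

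The paper's proof needs neither Fodor's lemma nor your $\tau$-machinery, and the missing idea is to transfer the separation \emph{into $H$ as a single countable separator} rather than uniformising $G$-separators. Since $T$ has countable height, choose $\alpha$ \emph{minimal} with $Y = \set{v \in V(H)}:{t_v \in T^{\alpha}}$ uncountable; minimality makes $X = \set{v \in V(H)}:{t_v \in T^{<\alpha}}$ a countable union of countable sets, hence countable, and deleting $X$ pairwise separates $Y$ in $H$: an $H$-path between $u,v\in Y$ avoiding $X$ pushes forward to a connected subgraph of $G$ whose vertices all lie at $T$-levels $\geq\alpha$ (each branch set lies in the up-set of its minimum), avoiding the separator $\downcl{t_u}\cap\downcl{t_v}\subseteq T^{<\alpha}$ from Lemma~\ref{lem_Tgraphproperties}(\ref{itemT1}). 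But no countable set can pairwise separate uncountably many vertices of an Aronszajn tree graph: $X\subseteq \mathcal{T}^{<\beta}$ for some $\beta<\omega_1$ (branches are countable), the sets $\mathcal{T}^{<\beta}$ and $\mathcal{T}^{\beta}$ are countable (levels are countable), and by Lemma~\ref{lem_Tgraphproperties}(\ref{itemT4}) all but countably many vertices of $H-X$ lie in one of the countably many connected up-sets $\upcl{t}$ with $t\in\mathcal{T}^{\beta}$, which are disjoint from $X$; so two vertices of $Y$ share a component of $H-X$, a contradiction. This uses exactly the graph-theoretic content you correctly identified as essential --- connectedness of up-sets, resting on the cofinal down-neighbourhoods --- but applied once, inside $H$, with no stationarity anywhere.
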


\begin{proof}
Let $\mathcal{T}$ be an Aronszajn tree and suppose that a $\mathcal{T}$-graph $H$ embeds into some $T$-graph $G$ as a minor. Using the notation of the previous proof, if $T$ has countable height, there is a level $T^\alpha$ of $T$ such that $Y = \set{v \in V(H)}:{t_v \in T^{\alpha}}$ is uncountable. If we choose $\alpha$ minimal, then deleting the countable set $X = \set{v \in V(H)}:{t_v \in T^{<\alpha}}$ in $H$ separates all vertices of $Y$ from each other.

However, we have $X \subset \mathcal{T}^{<\beta}$ for some $\beta<\omega_1$. By the Aronszajn property, both $\mathcal{T}^{<\beta}$ and $\mathcal{T}^{\beta}$ are countable. Since $H$ is a $\mathcal{T}$-graph, all but countably many vertices of $H - X$ are contained in a connected subgraph (Lemma~\ref{lem_Tgraphproperties}(\ref{itemT4})) of the form $\lfloor t \rfloor$ for some $t \in \mathcal{T}^{\beta}$. This contradicts that $Y$ is uncountable.
\end{proof}

\section{Discussion of Diestel \& Leader's proof}
\label{sec_discussion}

In this section we discuss the gap in Diestel and Leader's proof, and see how clubs and stationary sets of $\omega_1$ appear naturally when taking Diestel and  Leader's proof strategy to its logical conclusion.

Very briefly, given the task of constructing a normal spanning tree for some connected graph $G$, Diestel and Leader aim to partition $G$ into countable  subgraphs $H_1,H_2,H_3,\ldots$ such that each component of $H_{n}$ has only finitely many neighbours in $\overline{H}_n = \bigcup_{i < n} H_n$, and these finitely many neighbours span a clique. Given this setup, using Jung's Theorem~\ref{thm_Jung}(2), it is then not hard to extend a normal spanning tree of $\overline{H}_n$ to a normal spanning tree of $\overline{H}_{n} \cup H_n$.
However, since $G$  is  uncountable and the subgraphs are just countable, one needs a transfinite sequence $H_1,H_2,H_3,\ldots, H_\omega, H_{\omega+1},\ldots$ of such subgraphs. This is where the gap in Diestel and Leader's proof occurs, when they advise in \cite[\S 5]{DiestelLeaderNST} to ``\emph{repeat\textnormal{[}...\textnormal{]} this step transfinitely until $T_F$ is exhausted}'', for this strategy may fail at limit steps. Indeed, even if one carefully constructs, as Diestel and Leader do, the first $\omega$ many graphs $H_1,H_2,H_3,\ldots$ such that each graph has only finitely many neighbours in the union of the earlier ones, there might be trouble finding a suitable $H_\omega$ that has only finitely many neighbours in $\overline{H}_\omega = \bigcup_{n < \omega} H_n$; indeed,  the current attempt is doomed at this point if there exists just one vertex $v \in G - \overline{H}_\omega$ with infinitely many neighbours in $\overline{H}_\omega$: to which $H_i$ for $i \geq \omega$ shall it belong?

Hence, implementing this strategy successfully requires a certain amount of ``looking ahead'' in order to avoid problems at limit steps. 
As case in point, consider again the $\omega_1$-regular tree with all tops $(T,\leq)$, and let $G_{\omega_1}$ be any $T$-graph. Select an arbitrary collection $F \subset T^{\omega}$ of tops, let $T(F)$ denote the subtree of $T$ induced by all finite sequences in $T$ together with $F$, and $G_{\omega_1}(F)$ be the corresponding subgraph of $G_{\omega_1}$. 

For $i < \omega_1$ write $G_i$  for the subgraph of $G_{\omega_1}(F)$ induced by all sequences in $T(F)$ with values strictly less than $i$. Implementing the strategy following Diestel and Leader, one could select for example $H_{n+1} = G_{n+1} \setminus G_n$, as is readily verified using Lemma~\ref{lem_Tgraphproperties}(\ref{itemT3}). However, any top $f \in F$ with $f(n) < \omega$ for all $n \in \N$ but $\sup \set{f(n)}:{n \in \N} = \omega$ is then precisely such a vertex in $G - \overline{H}_\omega$ with  infinitely many neighbours in $\overline{H}_\omega$ that we are trying to avoid. 

Formalizing this observation, for $f \in F$ define $f^* := \sup \set{f(n)}:{n \in \N} < \omega_1$, $F^* = \set{f^*}:{f \in F} $, and $F^*_> = \set{f^* \in F^*}:{f^* >f(n) \text{ for all } n \in \N} \subset  \omega_1$, the subset of $F^*$ where the supremum is proper. Using this notation, selecting $H_{n+1} = G_{c_{n+1}} \setminus G_{c_n}$ for some increasing sequence $c_1 < c_2 < \cdots$ avoids this problem at the first limit step precisely if $c_\omega := \sup \set{c_n}:{n \in \N}$ does not belong to $F^*_>$; and it avoids the problem altogether if $C = \set{c_i}:{i < \omega_1}$ is a club set of $\omega_1$ with $C \cap F^*_> = \emptyset$. Such a club set $C$ exists if and only if $F^*_>$ fails to be stationary. In other words, the strategy suggested by Diestel and Leader can be carried out precisely when there is a suitable club set $C$ along which we can decompose the graph:

\begin{theorem}
\label{thm_clubdecomp}
A graph of the form $G_{\omega_1}(F)$ has a normal spanning tree if and only if it contains no $(\aleph_0,\aleph_1)$-subgraph and some club set $C \subset \omega_1$ avoids $F^*_>$.
\end{theorem}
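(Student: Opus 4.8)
The plan is to prove the two implications separately, using throughout the standard equivalence that the existence of a club set $C \subset \omega_1$ avoiding $F^*_>$ is the same as $F^*_>$ failing to be stationary.

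\emph{Necessity.} Suppose $G_{\omega_1}(F)$ has a normal spanning tree. For the first clause I would invoke that having a normal spanning tree is minor-closed while no $(\aleph_0,\aleph_1)$-graph has one; since every connected subgraph is a connected minor, $G_{\omega_1}(F)$ can contain no $(\aleph_0,\aleph_1)$-subgraph. For the second clause I argue by contraposition. If $F^*_>$ were stationary, then each $\beta \in F^*_>$ is a countable limit ordinal realised by some top $f_\beta \in F$ whose entries are cofinal in $\beta$ with proper supremum. Selecting one such $f_\beta$ for every $\beta$, the induced subgraph of $G_{\omega_1}(F)$ on all finite sequences together with $\set{f_\beta}:{\beta \in F^*_>}$ is precisely a graph of the type $G_{\omega_1}(S)$ from Section~\ref{sec_counterexamples} with $S = F^*_>$ stationary. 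This subgraph is connected (Lemma~\ref{lem_Tgraphproperties}(\ref{itemT4})), hence a connected minor, and by Lemma~\ref{lem_stationarytops} it has no normal spanning tree, contradicting that $G_{\omega_1}(F)$ has one. So $F^*_>$ is non-stationary and the required club exists.

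\emph{Sufficiency.} Assume $G_{\omega_1}(F)$ has no $(\aleph_0,\aleph_1)$-subgraph and fix a club $C = \set{c_i}:{i < \omega_1}$, enumerated increasingly and continuously, with $C \cap F^*_> = \emptyset$. Writing $G_\gamma$ for the subgraph induced by the nodes all of whose entries are $< \gamma$, and setting $\overline H_i = G_{c_i}$ and $H_i = G_{c_{i+1}} \setminus G_{c_i}$, I would build a normal spanning tree by transfinite recursion along $C$, maintaining at each stage a normal spanning tree $R_i$ of $\overline H_i$. Three ingredients drive the successor step: (i) the vertex set of each $G_\gamma$ is down-closed in $T(F)$, so by Lemma~\ref{lem_Tgraphproperties}(\ref{itemT3}) the components of $H_i$ are the up-sets $\upcl{t}$ for the $T$-minimal nodes $t$ of $H_i$, namely the finite sequences whose last entry is their first entry $\geq c_i$; (ii) since the ends of every edge are comparable, every neighbour of such a component in $\overline H_i$ lies among the proper initial segments of $t$ (anything above $t$ has an entry $\geq c_i$ and so is not in $\overline H_i$), a finite chain; and (iii) each component is countable, for otherwise a countable subtree of finite sequences would carry uncountably many tops, each of infinite degree into it, i.e.\ an $(\aleph_0,\aleph_1)$-subgraph, contrary to assumption. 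As a countable connected graph has a normal spanning tree, at successor steps I can attach each component under the maximum vertex of its finite neighbour-chain, exactly as in the Diestel--Leader/Jung extension step (Theorem~\ref{thm_Jung}).

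The crux, and the one place where club-avoidance is used, is the limit step. For limit $i$ the point $c_i = \sup_{j<i} c_j$ lies in $C$, hence $c_i \notin F^*_>$; I would check that this is exactly what guarantees $G_{c_i} = \bigcup_{j<i} G_{c_j}$ with no leftover vertex, since the only candidate leftover is a top $f$ with proper supremum $f^* = c_i$, whose existence would force $c_i \in F^*_>$. Thus the increasing union $\bigcup_{j<i} R_j$ already spans $\overline H_i$ and is a normal spanning tree of it, so the recursion passes through limits. The main technical obstacle is to run the successor extensions coherently: one must build the local normal spanning trees of the countable components by rerouting each top beneath the higher part of its branch, so that the finite-sequence tree order, and with it the finite neighbour-chains used for attachment, is preserved uniformly throughout the recursion. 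Verifying that this can be arranged, and that the limit unions remain genuinely normal (which is precisely what the absence of leftover tops secures), is where the argument must be made careful.
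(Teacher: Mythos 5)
Your necessity direction is essentially the paper's argument (choose one top $f_\beta$ with proper supremum $\beta$ for each $\beta$ in the stationary set $F^*_>$ and invoke Lemma~\ref{lem_stationarytops}), and your sufficiency skeleton -- club decomposition, countability of the pieces via the $(\aleph_0,\aleph_1)$-subgraph hypothesis, continuity at limits from $C \cap F^*_> = \emptyset$ -- also matches. But the successor step contains a genuine gap, and you have in fact flagged it yourself without closing it. To attach a component $D = G[\upcl{t}]$ normally to the already-built tree $R_i$, you need $N(D)$ to be a chain \emph{in the tree order of $R_i$}; your observation (ii) only gives that $N(D) \subseteq \mathring{\downcl{t}}$ is a chain in the order of $T$, which is a different partial order. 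Your proposed remedy -- maintain as an invariant that every $R_i$ preserves comparability of $T$-comparable finite sequences, by ``rerouting each top beneath the higher part of its branch'' -- is precisely the unverified step, and it cannot be carried out in an arbitrary $T$-graph. Tree edges of a spanning tree must be edges of $G$, and a top $f$ whose down-neighbourhood is only a sparse cofinal subset of its branch (say, every second initial segment, in a $T$-graph with no further edges along the branch) cannot be inserted into that branch: $f$ sits at finite height in any graph-theoretic tree, so cofinitely many of its neighbours must lie above it on one branch of $R_D$, whose consecutive vertices must be $G$-adjacent; since by Lemma~\ref{lem_Tgraphproperties}(\ref{itemT1}) the branch can never return to the tail once it leaves it, one is reduced to a Hamiltonian-path problem along a ray in which some initial segment of $f$ is always stranded. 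So the invariant you want to maintain simply fails for some $T$-graphs, even though they do admit normal spanning trees (which break $T$-comparability of non-adjacent pairs).

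The paper resolves this with a one-line reduction you are missing: without loss of generality $G_{\omega_1}(F)$ is the \emph{maximal} $T(F)$-graph, in which all comparable nodes are adjacent -- legitimate because if this graph has a normal spanning tree then so does every connected subgraph, and the hypothesis ``all $G_i$ countable'' is derived from the $(\aleph_0,\aleph_1)$-condition on the original graph before passing to the supergraph. In the maximal graph $\downcl{t}$ spans a clique, and a clique is a chain in \emph{any} normal spanning tree; hence at each successor step one may take a completely arbitrary normal spanning tree of each countable component, rooted at its $T$-minimal element $t$ via Theorem~\ref{thm_Jung}(2), and attach it above the $R_i$-maximal element of $\downcl{t}$ -- no coherence between the trees chosen at different stages, and no order-preservation invariant, is needed. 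Your limit step and the use of club-avoidance there are correct and identical to the paper's, but as written the recursion does not pass through successor steps, so the proof is incomplete without this (or some equivalent) reduction.
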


\begin{proof}
	We first prove the forwards implication. If $G_{\omega_1}(F)$ has a normal spanning tree, then it clearly cannot contain an $(\aleph_0,\aleph_1)$-subgraph. Now  assume for a contradiction that $F^*_>$ meets every club set of $\omega_1$. Then $S=F^*_>$ is stationary. For every $s \in S$ choose some $f_s \in F$ with $f_s^* = s$. Then $F(S) = \set{f_s}:{s \in S}$ gives rise to a subgraph of the form $G_{\omega_1}(S) \subset G_{\omega_1}(F)$ that fails to have a normal spanning tree by Lemma~\ref{lem_stationarytops}, a contradiction. 
	
	Conversely, assume that there is a club-set $C \subset \omega_1$ avoiding $F^*_>$. We show that $G_{\omega_1}(F)$ has a normal spanning tree unless it contains an $(\aleph_0,\aleph_1)$-subgraph. Without loss of generality, $G_{\omega_1}(F)$ is   the $T(F)$ graph where all comparable nodes are connected by an edge (if this graph has a normal spanning tree, then also all its connected subgraphs have normal spanning trees). This ensures that $\downcl{t}$ spans a clique for all $t \in T(F)$.
	
	Write $G_i$  for the subgraph of $G_{\omega_1}(F)$ induced by all sequences with values strictly less than $i$. If some  $G_i$ is uncountable, there must be uncountably many tops from $F$ above the countable subtree $G_i \cap T^{<\omega}$, giving rise to an $(\aleph_0,\aleph_1)$-subgraph. Hence, all $G_i$ are countable.
	
	Let $C= \set{c_i}:{i < \omega_1}$ be an increasing enumeration of the club set $C$. Then $G$ is the  increasing union over $\bigcup \set{G_{c_i}}:{i< \omega_1}$.  Moreover, this  union  is continuous precisely because $F^*_> \cap C = \emptyset$: an element $f \in G_{c_\ell} \setminus \bigcup_{i < \ell} G_{c_i}$ for a limit $\ell < \omega_1$ would satisfy $f^* = c_\ell$ and $f(n) < c_\ell$ for all $n \in \N$, and hence $f^* \in F^*_> \cap C$. 
	
	This allows us to construct -- by a transfinite recursion on $i < \omega_1$ -- an increasing chain of normal spanning trees $R_i$ of $G_{c_i}$ all with the same root extending each other. Assume that the normal spanning tree $R_i$ of $G_{c_i}$ is already defined. By Lemma~\ref{lem_Tgraphproperties}(\ref{itemT3}), the  components of $G_{c_{i+1}} \setminus G_{c_i}$ are spanned by the upsets $\upcl{t}$ for $t$ the $T$-minimal elements of $G_{c_{i+1}} \setminus G_{c_i}$. By definition of $G_{c_i}$, the down-closure $\downcl{t}$ for any such $t$ forms a finite clique in $G$. Hence, $\downcl{t}$ forms a chain in the normal spanning tree $R_i$. Let $r_t$ denote the maximal element of $\downcl{t}$ in $R_i$.
 Since $G_{c_{i+1}}$ is countable, there is by Theorem~\ref{thm_Jung}(2) for every component $\upcl{t}$ of $G_{c_{i+1}} \setminus G_{c_i}$ a normal spanning tree $R_t$ with root $t$. Attaching the $R_t$'s to $R_i$ with $t$ a successor of $r_t$ gives a normal spanning tree $R_{i+1}$ of $G_{c_{i+1}}$. 

By the continuity of our sequence, for any limit $\ell < \omega_1$, the union $R_\ell = \bigcup_{i < \ell} R_i$ is a normal spanning tree for $G_{c_\ell}$. Then $R = \bigcup_{i < \omega_1} R_{i}$ is the desired normal spanning tree for $G_{\omega_1}(F)$.
\end{proof}

\begin{cor}
\label{cor_clubdecomp}
A graph of the form $G_{\omega_1}(F)$ has a normal spanning tree if and only if it does not contain an $(\aleph_0,\aleph_1)$-subgraph or a subgraph isomorphic to $G_{\omega_1}(S)$ for $S \subset \omega_1$ stationary.
\end{cor}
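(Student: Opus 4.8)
The plan is to deduce the corollary directly from Theorem~\ref{thm_clubdecomp} by trading the club-set condition for the absence of a stationary obstruction. Recall the standard fact that a set $X \subseteq \omega_1$ is non-stationary if and only if some club set of $\omega_1$ is disjoint from $X$; applied to $X = F^*_>$, this says that ``some club set $C$ avoids $F^*_>$'' is equivalent to ``$F^*_>$ is not stationary''. Hence Theorem~\ref{thm_clubdecomp} already reads: $G_{\omega_1}(F)$ has a normal spanning tree if and only if it contains no $(\aleph_0,\aleph_1)$-subgraph and $F^*_>$ is not stationary. It therefore suffices to link ``$F^*_>$ is not stationary'' with ``$G_{\omega_1}(F)$ contains no subgraph isomorphic to any $G_{\omega_1}(S)$ with $S$ stationary'', and I would establish this separately in each of the two directions of the corollary, so that the more delicate implication can be routed through the normal spanning tree itself.

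For the forward implication, suppose $G_{\omega_1}(F)$ has a normal spanning tree. Theorem~\ref{thm_clubdecomp} already rules out an $(\aleph_0,\aleph_1)$-subgraph, so I only need to exclude a subgraph $H$ isomorphic to some $G_{\omega_1}(S)$ with $S$ stationary. Here I would use that every such $G_{\omega_1}(S)$ is connected (being a $T(S)$-graph, connected by Lemma~\ref{lem_Tgraphproperties}(\ref{itemT3}) applied with empty deleted set) and that $H$, as a subgraph of $G_{\omega_1}(F)$, is a connected minor of it via singleton branch sets. Then Halin's minor-closedness of the normal-spanning-tree property forces $H$, and hence the isomorphic $G_{\omega_1}(S)$, to have a normal spanning tree, contradicting Lemma~\ref{lem_stationarytops}. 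Thus no such subgraph exists.

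For the converse, assume $G_{\omega_1}(F)$ contains neither an $(\aleph_0,\aleph_1)$-subgraph nor any $G_{\omega_1}(S)$ with $S$ stationary. By Theorem~\ref{thm_clubdecomp} it is enough to produce a club avoiding $F^*_>$, that is, to show $F^*_>$ is not stationary. Suppose instead that $F^*_>$ is stationary. Then I would repeat the construction from the forward direction of the proof of Theorem~\ref{thm_clubdecomp}: set $S = F^*_>$, note that each $s \in S$ is a limit ordinal (being a non-attained supremum), and for each $s$ pick a witness $f_s \in F$ with $f_s^* = s$ and $f_s(n) < s$ for all $n$, so that $f_s$ is cofinal in $s$. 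The resulting $F(S) = \{f_s : s \in S\} \subseteq F$ induces a subgraph of $G_{\omega_1}(F)$ that is exactly of the form $G_{\omega_1}(S)$ with $S$ stationary, contradicting the hypothesis. Hence $F^*_>$ is non-stationary, a club avoiding it exists, and Theorem~\ref{thm_clubdecomp}, together with the absence of an $(\aleph_0,\aleph_1)$-subgraph, yields a normal spanning tree.

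The one point that needs care, and the closest thing to an obstacle, is the forward direction: I must ensure that ``$G_{\omega_1}(F)$ contains a subgraph isomorphic to $G_{\omega_1}(S)$'' really transfers the failure of having a normal spanning tree. This is precisely why I would invoke minor-closedness (legitimate since $G_{\omega_1}(S)$ is connected and any subgraph is a minor) rather than attempting the abstract equivalence ``$F^*_>$ non-stationary $\iff$ no $G_{\omega_1}(S)$-subgraph'' head on; the latter would require controlling \emph{arbitrary} isomorphic copies of $G_{\omega_1}(S)$ sitting inside $G_{\omega_1}(F)$, whereas routing through Lemma~\ref{lem_stationarytops} and Theorem~\ref{thm_clubdecomp} sidesteps that entirely and uses only the construction already carried out in the proof of the theorem.
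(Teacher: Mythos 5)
Your proposal is correct and takes essentially the same route as the paper: the corollary is left there as an immediate consequence of Theorem~\ref{thm_clubdecomp}, whose own proof already contains your key step of extracting a subgraph of the form $G_{\omega_1}(S)$ with $S = F^*_>$ stationary when no club avoids $F^*_>$, and the remaining direction is exactly your appeal to Lemma~\ref{lem_stationarytops} together with Halin's closure of the normal-spanning-tree property under connected minors (subgraphs via singleton branch sets). Your translation between ``some club avoids $F^*_>$'' and ``$F^*_>$ is non-stationary'' is the standard fact the paper also relies on, so nothing is missing.
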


\section{Irreducible obstructions of size \texorpdfstring{$\kappa$}{kappa}}
\label{sec_counterexamples2}

One particular consequence of Diestel and Leader's proposed forbidden minor characterisation would have been that `not having a normal spanning tree' is a property reflecting to at least one minor of size $\aleph_1$. 
However, it turns out that for all uncountable regular cardinals $\kappa$ there are $\kappa$-sized graphs without a normal spanning tree that consistently have the property that all their minors of size $<\kappa$ do have normal spanning trees.

These examples are natural generalisations of our earlier examples from Theorem~\ref{thm_counter} to arbitrary regular uncountable cardinals $\kappa$. They are defined as follows: Consider the $\kappa$-regular tree with all tops, represented by all sequences of elements of $\kappa$ of length $\leq\omega$, and let $G_{\kappa}$ be any $T$-graph. Given a set $S \subset \kappa$ of limit ordinals \emph{of countable cofinality}, choose for each $s \in S$ a cofinal sequence $f_s \colon \N \to s$, and let $F=F(S) = \set{f_s}:{s \in S}$ be the corresponding collection of sequences in $\kappa$. Let $T(S)$ denote the subtree of $T$ given by all finite sequences in $T$ together with $F(S)$, and let $G_{\kappa}(S)$ denote the corresponding induced subgraph of $G_{\kappa}$.

\begin{theorem}
\label{thm_aleph2obstruction}
Let $\kappa$ be a regular uncountable cardinal. Whenever $S \subset \kappa$ is stationary consisting just of cofinality $\omega$ ordinals, then $G_{\kappa}(S)$ does not have a normal spanning tree. 

Furthermore, it is consistent with the axioms of set theory ZFC that for any regular uncountable cardinal $\kappa$ there exists such a stationary set $S \subset \kappa$ such that all minors of $G_\kappa(S)$ of cardinality strictly less than $\kappa$ have normal spanning trees.
\end{theorem}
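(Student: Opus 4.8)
The plan is to prove Theorem~\ref{thm_aleph2obstruction} in two parts, mirroring the structure already used for $\aleph_1$. The first sentence --- that $G_\kappa(S)$ has no normal spanning tree whenever $S\subset\kappa$ is stationary and consists of cofinality-$\omega$ ordinals --- should follow by adapting the proof of Lemma~\ref{lem_stationarytops} verbatim, replacing $\omega_1$ by $\kappa$ throughout. The key point is that Lemmas~\ref{lem_stationary}(1) and (2) hold for every regular uncountable $\kappa$, so the pigeonhole-then-Fodor argument goes through unchanged: if $R$ were a normal spanning tree, one finds a stationary $S'\subset S$ landing on a fixed level $R^n$, then a stationary $S''$ on which the first $n+1$ neighbours sit on fixed levels $T^{\le m}$, and finally (by $m$ iterations of Fodor) a stationary $S'''$ on which the relevant coordinates $f_s(i)$ agree for $i\le m$, forcing $n+1$ common neighbours and contradicting the $n$-vertex separator from Lemma~\ref{lem_Tgraphproperties}(\ref{itemT1}). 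I would state this as a short remark that the earlier proof ``localises'' to $\kappa$.

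The substantive content is the consistency statement. The goal is a stationary $S\subset\kappa$, consisting of cofinality-$\omega$ ordinals, such that every minor $H\preceq G_\kappa(S)$ with $\cardinality{V(H)}<\kappa$ has a normal spanning tree. First I would reduce ``having a normal spanning tree'' via Theorem~\ref{thm_Jung} to a countable-union-of-dispersed-sets condition, and observe that by minor-closedness it suffices to control the minors directly. The natural strategy is: a minor $H$ of size $<\kappa$ has branch sets whose minimal nodes $t_v$ (Lemma~\ref{lem_Tgraphproperties}(\ref{itemT2})) lie in $T(S)$, and because $\cardinality{V(H)}<\kappa=\cf(\kappa)$, all the \emph{finite} coordinate values appearing among these branch sets are bounded by some $i<\kappa$; moreover the tops $f_s$ actually used number fewer than $\kappa$. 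The plan is to show that such a bounded minor embeds into (a minor of) some $G_\kappa(S')$ for a \emph{small} $S'$, which one then wants to be non-stationary in the relevant localised sense so that the club-decomposition machinery of Theorem~\ref{thm_clubdecomp} applies to produce a normal spanning tree.

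The set-theoretic heart is choosing $S$ so that this reflection works, and this is where I expect the main obstacle to lie. The idea is to use a non-reflecting stationary set: by a classical theorem it is consistent (and provable outright at successors of regular cardinals, e.g.\ at $\kappa=\aleph_2$ one takes $S=\set{\alpha<\aleph_2}:{\cf(\alpha)=\omega}$ restricted suitably) that there is a stationary $S\subset\kappa$ of cofinality-$\omega$ points such that $S\cap\alpha$ is non-stationary in $\alpha$ for every $\alpha<\kappa$ of uncountable cofinality. With such a non-reflecting $S$ in hand, I would argue that any minor $H$ of size $<\kappa$ only ``sees'' tops indexed by $S\cap\alpha$ for some $\alpha<\kappa$ of uncountable cofinality (using regularity of $\kappa$ to bound the coordinates and the index set), and that on $S\cap\alpha$ there is a club of $\alpha$ avoiding the problematic suprema. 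Feeding this club into the forward direction of the argument behind Theorem~\ref{thm_clubdecomp} --- the continuous club-decomposition that builds normal spanning trees by transfinite recursion using Theorem~\ref{thm_Jung}(2) at successor steps and continuity at limits --- yields a normal spanning tree for $H$.

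The hard part will be the bookkeeping that converts ``$H$ is a minor of size $<\kappa$'' into ``$H$ embeds into a graph of the form $G_\kappa(S')$ with $S'$ non-stationary,'' because minors (unlike subgraphs) contract connected branch sets and one must verify that contraction does not create a fresh stationary obstruction; concretely I must check that the finitely-many lower neighbours controlling each branch set remain a finite clique after passing to the minor, so that Theorem~\ref{thm_clubdecomp}'s hypotheses (no $(\aleph_0,\aleph_1)$-type subgraph, plus an avoiding club) are genuinely inherited. I would handle the first obstruction --- possible $(\aleph_0,\aleph_1)$-minors --- by the same reasoning as Lemma~\ref{lem_aleph0aleph_1}: a bounded countable subtree of $T^{<\omega}$ can carry only countably many tops, so no size-$<\kappa$ minor can realise the $(\aleph_0,\aleph_1)$-pattern once coordinates are bounded below $\alpha$. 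Assembling these pieces, together with the non-reflecting stationary set, gives the consistency conclusion.
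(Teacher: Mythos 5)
Your first half matches the paper exactly: Lemma~\ref{lem_stationarytops} only uses Lemma~\ref{lem_stationary} for an arbitrary regular uncountable cardinal, so it localises to $\kappa$ verbatim, and your set-theoretic input for the consistency half (a non-reflecting stationary set of cofinality-$\omega$ ordinals, available e.g.\ under Jensen's $\square_\kappa$ in the constructible universe) is also the paper's. But your reduction of the minors is more complicated than necessary, and the step you flag as the ``hard part'' is a red herring: since having a normal spanning tree is minor-closed (Halin's observation, cited in the introduction), you never need to re-embed the minor $H$ into a graph of the form $G_\kappa(S')$, nor re-verify the hypotheses of Theorem~\ref{thm_clubdecomp} (no $(\aleph_0,\aleph_1)$-subgraph, avoiding club) for $H$. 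The paper simply notes that every minor of size $<\kappa$ is a contraction of a $<\kappa$-sized subgraph, hence by regularity of $\kappa$ a minor of $G_i$ for some $i<\kappa$, and then proves by transfinite induction that every $G_i$ has a normal spanning tree; minor-closedness finishes. Your worry about contraction creating a ``fresh stationary obstruction'' thereby dissolves entirely.

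The genuine gap is in your appeal to the ``club-decomposition machinery of Theorem~\ref{thm_clubdecomp}'', which does not transfer as stated once $\kappa>\omega_1$: the chain pieces $G_{c_i}$ are then generally uncountable, so the successor step of Theorem~\ref{thm_clubdecomp}, which invokes Jung's Theorem~\ref{thm_Jung}(2) for \emph{countable} graphs, is unavailable. The paper replaces it by a different successor argument: no component $D$ of $G_{i+1}\setminus G_i$ contains a top (any $f_s\in G_{i+1}$ satisfies $s=\sup_n f_s(n)\le i$, so $f_s\in G_i$), hence the tree order of $T$ restricted to $D$ already prescribes a normal spanning tree, attachable behind the highest element of the finite clique $N(D)$. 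Moreover, at a limit $\ell\in S$ (necessarily of countable cofinality), a club $C\subset\ell$ missing $S$ controls all limits strictly \emph{below} $\ell$, but the top $f_\ell$ itself lies in $G_\ell\setminus\bigcup_i G_{c_i}$: exactly one leftover vertex with infinitely many neighbours in the union, which no choice of avoiding club can eliminate since $\ell\notin C$ is impossible to arrange ($\ell=\sup C$). The paper's fix is to attach $f_\ell$ \emph{below} the root, making it the new root of the normal spanning tree of $G_\ell$. Your sketch never confronts this vertex; choosing your bound $\alpha$ of uncountable cofinality only pushes the problem inside the recursion, where limits in $S$ must still be passed. (A small aside: your parenthetical claim that non-reflecting stationary sets of cofinality-$\omega$ points exist outright at successors of regular cardinals, e.g.\ at $\aleph_2$, is false --- their existence at $\aleph_2$ is independent of ZFC, failing for instance under Martin's Maximum --- but since the theorem only asserts consistency, this slip does not affect the statement being proved.)
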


\begin{proof}
Indeed, that $G_{\kappa}(S)$ does not have a normal spanning tree follows as in Lemma~\ref{lem_stationarytops}.

To see the furthermore part of the theorem, recall that assuming Jensen's square principle $\square_{\kappa}$, which holds for example in the constructible universe, there exists a \emph{non-reflecting} stationary set in $\kappa$, i.e.\ a stationary set $S \subset \kappa$ consisting just of cofinality $\omega$ ordinals such that for any limit ordinal $\ell < \kappa$, the restriction $S \cap \ell$ is not stationary in $\ell$, see e.g. \cite[\S4]{cummings2005notes}.

It remains to show that for any non-reflecting stationary set $S \subset \kappa$ of cofinality $\omega$ ordinals, the graph $G_{\kappa}(S)$ is a graph without normal spanning tree such that all minors of size $< \kappa$ do have normal spanning trees. As in the proof of Theorem~\ref{thm_clubdecomp}, we may assume that $G_{\kappa}(S)$ is the $T$-graph where all comparable vertices of $T(S)$ are adjacent.

For $i < \kappa$, denote by $G_i$ once again the subgraph of $G_\kappa(S)$ induced by all sequences in $T(S)$ with values strictly less than $i$. Every minor of size $< \kappa$ of $G_{\kappa}(S)$ is a contraction of some ${<}\kappa$-sized subgraph of $G_{\kappa}(S)$, and hence by regularity of $\kappa$, a minor of $G_i$ for some $i  < \kappa$.
Hence, it suffices to show that every $G_i$ for $i < \kappa$ has a normal spanning tree. This will be done by induction on $i$, following the proof idea in Theorem~\ref{thm_clubdecomp}.

The base case for the induction is trivial. In the successor step, fix a normal spanning tree $R$ for $G_i$. By definition of $F(S)$, no component $D$ of $G_{i+1} \setminus G_i$ can contain an element from $F$. Hence, by Lemma~\ref{lem_Tgraphproperties}(\ref{itemT3}), the tree order of $(T,\leq)$ restricted to such a component prescribes a normal spanning tree $R_D$ for $D$. As $N(D)$ is finite and complete in $G_i$, its elements form a chain in $R$. Hence, attaching $R_D$ to $R$ as a further uptree behind the highest element of $N(D)$ in $R$ for every such component $D$ gives rise to a normal spanning tree for $G_{i+1}$. 

At a limit step $\ell < \kappa$, by choice of $S$ there is a club set $C \subset \ell$ which misses $S$. Let $C= \set{c_i}:{i < \cf(\ell)}$ be an increasing enumeration of $C$. From $S \cap C = \emptyset$ is follows that $\set{G_{c_i}}:{i < \cf(\ell)}$ is an increasing, continuous chain in $G_\ell$ such that every component of $G_{c_{i+1}} \setminus G_{c_i}$ has finite neighbourhood in $G_{c_i}$. As in Theorem~\ref{thm_clubdecomp}, this allows us to construct an increasing chain of normal spanning trees $R_i$ of $G_{c_i}$ extending each other all with the root of $T$ as their root.
 Then $R = \bigcup_{i < \cf(\ell)} R_{i}$ is a normal tree in $G_\ell$.

If $\cf(\ell) > \omega$, then $G_\ell = \bigcup \set{G_{c_i}}:{i < \cf(\ell)}$ and we are done. Otherwise, if $\cf(\ell) = \omega$, we could have $f_\ell \in G_\ell \setminus  \bigcup \set{G_{c_i}}:{i < \cf(\ell)}$ if $\ell \in S$. In this case, we attach $f_\ell$ \emph{below} the root of $R$ to form a normal spanning tree of  $G_\ell$ rooted in $f_\ell$.
\end{proof}

It might come as a surprise to hear that no such examples as in the furthermore-part of Theorem~\ref{thm_aleph2obstruction} are possible at singular uncountable cardinals $\kappa$. Indeed, the property of having a normal spanning tree exhibits the following singular compactness-type behaviour: If a connected graph $G$ of singular uncountable size $\kappa$ has the property that all its subgraphs of size $<\kappa$ have normal spanning trees, then so does $G$ itself. For details, we refer the reader to \cite{pitz2020d}.

\section{Further problems on normal spanning trees and forbidden minors}

\begin{prob}
	Is there an Aronszajn tree graph that contains neither an $(\aleph_0,\aleph_1)$-graph nor a graph $G_{\omega_1}(S)$ as in Theorem~\ref{thm_counter} as a minor?
\end{prob}

\begin{prob}
	Does every $\aleph_1$-sized graph without normal spanning tree contain either an $(\aleph_0,\aleph_1)$-graph, an Aronszajn tree graph or a graph $G_{\omega_1}(S)$ as in Theorem~\ref{thm_counter} as a minor?
\end{prob}

\begin{prob}
Is it consistent with the axioms of set theory ZFC that every graph without normal spanning tree contains an $\aleph_1$-sized subgraph or minor without normal spanning tree?  
\end{prob}

\bibliographystyle{plain}
\bibliography{reference}

\end{document}